\newtheorem{theorem}{Theorem}
\newtheorem{lemma}{Lemma}
\newtheorem{proposition}{Proposition}
\newtheorem{example}{Example}
\newtheorem{definition}{Definition}
\begin{document}
\title{\Large Realized Cumulants for Martingales}
\author{%
\large   M. Fukasawa
}
\author{\large K. Matsushita}
\begin{abstract}
{\normalsize
Generalizing the realized variance,
the realized skewness (Neuberger, 2012) and the realized kurtosis
 (Bae and Lee, 2020), we construct realized cumulants with the so-called 
aggregation property.
They are unbiased statistics of the cumulants of a martingale marginal
 based on sub-period increments of the martingale and its lower-order
 conditional cumulant processes. Our key finding is a relation between
 the aggregation
 property and the complete Bell polynomials.
For an application we give an alternative proof and an extension 
of a cumulant recursion
formula recently obtained by Lacoin et al.~(2019) and Friz et al.~(2020).
}
\end{abstract}

\maketitle

\section{Introduction}
A square-integrable martingale $M = \{M_t\}_{t \in [0,T]}$ satisfies
\begin{equation}
    \label{eq:intro}
    E[(M_T-M_0)^n] = E\left[ \sum_{j=1}^{N} (M_{t_j}-M_{t_{j-1}})^n
		      \right]
\end{equation}
for $n=2$ with an arbitrary time partition  $0=t_0<\dots<t_N=T$.
On one hand this is a crucial property to develop a theory of
martingale (see e.g., \cite{CE}) and on the other hand, 
after the seminal work~\cite{ABDL},
this has played an important role in
financial econometrics, where the sum of the squared increments is
called the realized variance and has been used as an accurate measure of
the
volatility of an asset price modelled by $M$.
Unfortunately (\ref{eq:intro}) does not hold for $n\geq 3$, which
causes a difficulty in estimating higher
order moments based on high frequency data; see~\cite{N} for the details.

For a Borel function $g$ on $\mathbb{R}^d$ and an $\mathbb{R}^d$-valued adapted process $\mathbb{X}$ on a filtered probability
space $(\Omega,\mathcal{F},P,\{\mathcal{F}_t\}_{t \in [0,T]})$,
the aggregation property introduced by Neuberger~\cite{N} refers to
that
    \begin{equation}
        \label{eq:Aggregation Property}
        E\left[ g(\mathbb{X}_u-\mathbb{X}_s) |\mathcal{F}_s
	 \right] = E\left[ g(\mathbb{X}_u-\mathbb{X}_t)
	  |\mathcal{F}_s \right] + E[g(\mathbb{X}_t-\mathbb{X}_s)|\mathcal{F}_s]
    \end{equation}
    for any $s \le t \le u \leq T$. 
The identity (\ref{eq:intro}) with $n=2$ can be seen as a consequence of
the aggregation property of $g(x) = x^2$ and $\mathbb{X}=M$.
An interesting finding by Neuberger~\cite{N} is that
the aggregation property is met by
$g(x,y) = x^3 + 3xy$ and $\mathbb{X} = (M,M^{(2)})$ for any 
$L^3$-martingale $M$, where   $M^{(n)}_t =
E[(M_T-M_t)^n|\mathcal{F}_t]$.  Observing that this implies
\begin{equation*}
 E[(M_T-M_0)^3] = E[g(\mathbb{X}_T-\mathbb{X}_0)] = E\left[
\sum_{j=1}^N g(\mathbb{X}_{t_j}-\mathbb{X}_{t_{j-1}})
\right],
\end{equation*}
Neuberger~\cite{N} named the high-frequency statistic
\begin{equation*}
 \sum_{j=1}^N g(\mathbb{X}_{t_j}-\mathbb{X}_{t_{j-1}})
= \sum_{j=1}^N\left\{ (M_{t_j}-M_{t_{j-1}})^3 +
	              3(M_{t_j}-M_{t_{j-1}})(M^{(2)}_{t_j}-M^{(2)}_{t_{j-1}})\right\}
\end{equation*}
the realized skewness, and conducted an econometric analysis with it.
 Bae and Lee~\cite{BL} further extended the idea to construct
the realized kurtosis, by 
finding that the aggregation property is met by
$g(x,y,z) = x^4+ 6x^2y + 3y^2 + 4xz$ and
$\mathbb{X} = (M,M^{(2)},M^{(3)})$.
Neuberger~\cite{N} and
 Bae and Lee~\cite{BL}'s approach to find those polynomials is rather
 brute force. Indeed,  they showed also that there is no other
 analytic function (up to linear combinations) $g$ satisfying the aggregation property with
$\mathbb{X} = (M,M^{(2)},M^{(3)})$.
An extension to the higher order cumulants (or moments) has been left open.

The aim of this paper is to unveil this mysterious property.
Our finding is that
\begin{equation*}
 g_n(x_1,\dots,x_n) = B_{n+1}(x_1,\dots,x_n,0)
\end{equation*}
satisfies the aggregation property with
\begin{equation*}
 \mathbb{X}^{(n)} = (X^{(1)},\dots,X^{(n)})
\end{equation*}
for any $n \in \mathbb{N}$, 
where $B_{n+1}$ is the $(n+1)$-th complete Bell polynomial and
$X^{(i)}$ is the $i$th conditional cumulant process of an $L^n$
integrable
 random variable $X$
(see Section~4 for the details).
When $X = M_T$, then $\mathbb{X}^{(3)} = (M,M^{(2)},M^{(3)})$ and we have
\begin{equation}\label{intromain}
X^{(n+1)}_t = E\left[\sum_{j=1}^N
		g_n(\mathbb{X}^{(n)}_{t_j} -
		\mathbb{X}^{(n)}_{t_{j-1}}) \bigg|
		\mathcal{F}_t\right]
\end{equation} 
for any partition
 $t=t_0<\dots<t_N=T$ of $[t,T]$.
The cases $n=1$, $2$ and $3$ correspond to the realized variance,
Neuberger's realized skewness and Bae and Lee's realized kurtosis
respectively. 

Taking high-frequency limit in (\ref{intromain}), we have
\begin{equation*}
 X^{(n+1)}_t = E\left[
 \sum_{s \in (t,T]} g_n(\Delta \mathbb{X}^{(n)}_s) + 
\frac{1}{2}\sum_{j=1}^n \binom{n+1}{j}\int_{t}^T \mathrm{d}
\langle X^{(n+1-j),c}, X^{(j),c} \rangle
\bigg|\mathcal{F}_t
\right],
\end{equation*}
where $X^{(j),c}$ is the continuous local martingale part of the
semimartingale $X^{(j)}$. This extends a cumulant recursion formula 
recently obtained by Lacoin et al.~\cite{LRV} and Friz et
al.~\cite{FGR}.

After recalling the complete Bell polynomials and their properties in
Section~2, we introduce conditional cumulants in Section~3.
Then, we give our main results in Section~4.
The application to the cumulant recursion
formula is given in Section~5.

\section{The complete Bell polynomials}
Here, we recall the complete Bell polynomials and some of their
properties we use in this work.
This section does not contain any new results but we include proofs for
the readers' convenience.

\begin{definition}[The complete Bell polynomials]
Let  $n \in \mathbb{N}$.
For $(x_1,\dots,x_n) \in \mathbb{R}^n$,
     the $n$th
 complete Bell polynomial $B_n(x_1,\dots,x_n)$ is defined by
    \begin{equation}
        B_n(x_1,\dots,x_n) = \frac{\partial^{n}}{\partial
				  z^{n}} \exp\left( \sum_{i=1}^{n} x_i
					      \frac{z^i}{i!} \right)
				 \bigg|_{z=0}
    \end{equation}
    with $B_0=1$.
\end{definition}

\begin{example}\label{ex1}
Here are some examples of the complete Bell poynomials.
    \begin{equation*}
        \begin{split}
& B_1(x_1) = x_1,\\
& B_2(x_1,x_2) = x_1^2 + x_2,\\
&B_3(x_1,x_2,x_3) = x_1^3 + 3x_1x_2 + x_3,\\
&B_4(x_1,x_2,x_3,x_4) = x_1^4 + 6x_1^2x_2 + 4x_1x_3 + 3x_2^2 + x_4,\\
            &B_n(x_1,0,\dots,0) = \left. \frac{\partial^{n}}{\partial
	 z^{n}} \sum_{k=0}^{\infty} \frac{1}{k!} \left( x_1 z \right)^k
	 \right|_{z=0} = \left. \frac{\partial^{n}}{\partial z^{n}}
	 \frac{1}{n!} \left( x_1 z \right)^n \right|_{z=0} = (x_1)^n, \\
            &B_n(0,\dots,0,x_n) = \left. \frac{\partial^{n}}{\partial
	 z^{n}} \sum_{k=0}^{\infty} \frac{1}{k!} \left( x_n
	 \frac{z^n}{n!} \right)^k \right|_{z=0} =
	 \left. \frac{\partial^{n}}{\partial z^{n}} \left( x_n
	 \frac{z^n}{n!} \right) \right|_{z=0} = x_n.
        \end{split}
    \end{equation*}    
\end{example}

\begin{proposition}[The generating function]\label{prop:gen}
If one of 
 $$\sum_{n=1}^{\infty}x_n \frac{z^n}{n!}
\ \ \text{and} \ \ \sum_{n=0}^{\infty} B_n(x_1,\dots,x_n) \frac{z^n}{n!}$$
is absolutely convergent on a neighbourhood of $z = 0$, then so is the
 other and
\begin{equation}\label{gen}
 \exp\left(\sum_{n=1}^{\infty}x_n \frac{z^n}{n!}\right)
= \sum_{n=0}^{\infty} B_n(x_1,\dots,x_n) \frac{z^n}{n!}
\end{equation}
on a neighborhood of $z = 0$.
\end{proposition}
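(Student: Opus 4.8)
The plan is to identify $B_n(x_1,\dots,x_n)/n!$ as the $n$th Taylor coefficient at $z=0$ of the analytic function $\exp\bigl(\sum_{i\ge 1}x_i z^i/i!\bigr)$, and then to run standard analytic-function arguments in the two directions. The heart of the matter is a single coefficient identity: whenever the series $g(z)=\sum_{i\ge 1}x_i z^i/i!$ converges on a neighbourhood of $0$, one has $\frac{\partial^n}{\partial z^n}\exp(g)\big|_{z=0}=B_n(x_1,\dots,x_n)$. To see this, let $p_n(z)=\sum_{i=1}^n x_i z^i/i!$ be the degree-$n$ truncation appearing in the definition of $B_n$; the remainder $g-p_n=\sum_{i>n}x_i z^i/i!$ vanishes to order $n+1$ at the origin. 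Factoring $\exp(g)=\exp(p_n)\exp(g-p_n)$ and using $\exp(g-p_n)=1+O(z^{n+1})$, the functions $\exp(g)$ and $\exp(p_n)$ have identical derivatives up to order $n$ at $z=0$; by the definition of $B_n$ this gives the claimed identity.

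For the forward implication, I would assume $g(z)=\sum_{n\ge 1}x_n z^n/n!$ converges absolutely on a neighbourhood of $0$, so that $g$, and hence $\exp(g)$, is analytic there. Then $\exp(g)$ has a Taylor series converging absolutely on a neighbourhood of $0$, and by the coefficient identity its $n$th coefficient is $B_n(x_1,\dots,x_n)/n!$. This simultaneously yields the absolute convergence of $\sum_{n\ge 0}B_n(x_1,\dots,x_n)z^n/n!$ and the identity (\ref{gen}).

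The backward implication is the delicate step, and I expect it to be the main obstacle: here one must manufacture the absolute convergence of the $x$-series out of that of the Bell series. Assume $F(z)=\sum_{n\ge 0}B_n(x_1,\dots,x_n)z^n/n!$ converges absolutely near $0$. Since $B_0=1$ we have $F(0)=1\ne 0$, so $\log F$ is well defined and analytic on a (possibly smaller) neighbourhood of $0$ with $\log F(0)=0$; write its Taylor expansion as $\sum_{n\ge 1}a_n z^n/n!$, which converges absolutely near $0$. Applying the forward implication to the sequence $(a_n)$, the Taylor coefficients of $\exp(\log F)=F$ are $B_n(a_1,\dots,a_n)/n!$, and matching with the given coefficients gives $B_n(a_1,\dots,a_n)=B_n(x_1,\dots,x_n)$ for every $n$.

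It remains to conclude $a_n=x_n$. Because $B_n(x_1,\dots,x_n)=x_n+Q_n(x_1,\dots,x_{n-1})$ for a polynomial $Q_n$ not involving $x_n$---the coefficient of $x_n$ being exactly $1$, as confirmed by $B_n(0,\dots,0,x_n)=x_n$ in Example~\ref{ex1}---the map $x_n\mapsto B_n(x_1,\dots,x_n)$ is a bijection once $x_1,\dots,x_{n-1}$ are fixed. An induction on $n$ then forces $a_n=x_n$ for all $n$. Hence $\sum_{n\ge 1}x_n z^n/n!=\log F$ is analytic, therefore absolutely convergent on a neighbourhood of $0$, and (\ref{gen}) follows. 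The crucial maneuver throughout the backward direction is the passage to $\log F$, which is legitimate precisely because $B_0=1$ keeps $F$ away from $0$ at the origin; the affine-linearity of $(x_1,\dots,x_n)\mapsto B_n$ in its last argument is what closes the induction.
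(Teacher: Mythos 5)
Your proof is correct. The forward direction is essentially the paper's own argument: the paper obtains the coefficient identity by observing that replacing $\sum_{i=1}^{\infty}x_i z^i/i!$ with its degree-$n$ truncation does not change the $n$th derivative of the exponential at $z=0$, which is exactly your factorization $\exp(g)=\exp(p_n)\exp(g-p_n)$ with $\exp(g-p_n)=1+O(z^{n+1})$. Where you genuinely add something is the backward direction. The paper's proof handles only the case where the $x$-series converges (``if, for example, the first series is absolutely convergent\dots'') and leaves the converse implicit, even though the two directions are not symmetric: the converse requires passing to $\log F$, which is legitimate only because $B_0=1$ forces $F(0)=1\neq 0$, and then identifying the resulting coefficients $a_n$ with $x_n$. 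Your identification step --- that $B_n(x_1,\dots,x_n)=x_n+Q_n(x_1,\dots,x_{n-1})$ with the coefficient of $x_n$ equal to $1$, so that $B_k(a_1,\dots,a_k)=B_k(x_1,\dots,x_k)$ for all $k\leq n$ forces $(a_1,\dots,a_n)=(x_1,\dots,x_n)$ by induction --- is precisely the content of the paper's Proposition~\ref{ident}, proved there via the decomposition (\ref{dec}) but only after this proposition and for a different purpose. So your argument is a completed version of what the paper sketches; the cost is a somewhat longer proof, and the benefit is that the claimed equivalence of the two convergence hypotheses is actually established rather than asserted by analogy.
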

\begin{proof}
If, for example, the first series is absolutely convergent on a
 neighborhood of $z = 0$, then 
the left hand side of (\ref{gen}) is analytic on the neighborhood and 
so, admits
 the Taylor expansion around $z=0$. The coefficients are determined as
\begin{equation*}
\frac{\partial^n}{\partial z^n}
\sum_{k=0}^{\infty}\frac{1}{k!}\left(\sum_{i=1}^{\infty}x_i\frac{z^i}{i!}\right)^k
\bigg|_{z=0}
= 
\frac{\partial^n}{\partial z^n}
\sum_{k=0}^{\infty}\frac{1}{k!}\left(\sum_{i=1}^{n}x_i\frac{z^i}{i!}\right)^k
\bigg|_{z=0}
= B_n(x_1,\dots,x_n).
\end{equation*}
\end{proof}

\begin{proposition}[A binomial type relation]
    \label{prop:bell}
    Let $n \in \mathbb{N}$ and $(x_1,\dots x_n), 
(y_1,\dots,y_n) \in \mathbb{R}^n$. Then,
    \begin{equation}
        \label{eq:bell}
        B_n(x_1+y_1,\dots,x_n+y_n) = \sum_{j=0}^{n} \binom{n}{j}
        B_{n-j}(x_1,\dots,x_{n-j}) B_j(y_1,\dots,y_j). 
    \end{equation}
\end{proposition}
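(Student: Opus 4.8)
The plan is to deduce this purely algebraic identity from the exponential generating function established in Proposition~\ref{prop:gen}. The observation driving the whole argument is that the exponent splits additively,
\begin{equation*}
\sum_{i=1}^{n} (x_i+y_i)\frac{z^i}{i!} = \sum_{i=1}^{n} x_i\frac{z^i}{i!} + \sum_{i=1}^{n} y_i\frac{z^i}{i!},
\end{equation*}
so that the generating function factorizes:
\begin{equation*}
\exp\left(\sum_{i=1}^{n} (x_i+y_i)\frac{z^i}{i!}\right) = \exp\left(\sum_{i=1}^{n} x_i\frac{z^i}{i!}\right)\exp\left(\sum_{i=1}^{n} y_i\frac{z^i}{i!}\right).
\end{equation*}
To bring Proposition~\ref{prop:gen} to bear, I would extend each finite tuple to an infinite sequence by setting $x_i = y_i = 0$ for $i > n$; the three inner sums are then polynomials in $z$ and hence absolutely convergent near $z=0$, so each of the three exponentials equals the corresponding Bell series $\sum_{m\ge 0} B_m(\cdots)\,z^m/m!$.

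Next I would expand the product on the right as a Cauchy product of the two Bell series and read off the coefficient of $z^n/n!$. Because these are exponential generating functions, the coefficient extraction produces precisely the binomial convolution
\begin{equation*}
\sum_{j=0}^{n} \binom{n}{j} B_{n-j}(x_1,\dots,x_{n-j})\, B_j(y_1,\dots,y_j),
\end{equation*}
while matching it against the coefficient on the left side yields $B_n(x_1+y_1,\dots,x_n+y_n)$, giving the claimed identity~\eqref{eq:bell}. The one point requiring care is that the $m$-th Bell polynomial depends only on its first $m$ arguments, so that for each relevant index $m \le n$ the padding zeros introduced above are harmless and the expression reduces to $B_m$ evaluated at the genuine variables.

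An equivalent route, which avoids any appeal to convergence, is to differentiate directly: since both factors on the right are exponentials of polynomials and thus entire, the general Leibniz rule gives
\begin{equation*}
\frac{\partial^n}{\partial z^n}\big[g(z)h(z)\big]\bigg|_{z=0} = \sum_{j=0}^{n}\binom{n}{j}\, g^{(n-j)}(0)\, h^{(j)}(0),
\end{equation*}
with $g(z)=\exp(\sum_{i=1}^n x_i z^i/i!)$ and $h(z)=\exp(\sum_{i=1}^n y_i z^i/i!)$. Here the only subtlety, and the step I expect to be the main obstacle, is verifying that $g^{(n-j)}(0) = B_{n-j}(x_1,\dots,x_{n-j})$ even though $g$ carries the extra variables $x_{n-j+1},\dots,x_n$: those terms contribute to the Taylor expansion of $g$ only at orders strictly above $n-j$, so they leave the $(n-j)$-th derivative at $0$ untouched. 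Once this truncation fact is established, the Leibniz expansion is exactly~\eqref{eq:bell}, and I would present whichever of the two arguments reads more cleanly in context, most likely the generating-function version since Proposition~\ref{prop:gen} is already at hand.
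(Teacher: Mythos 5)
Your proposal is correct, and your second route (factor the exponential and apply the Leibniz rule to the $n$-th derivative at $z=0$) is exactly the paper's proof; the generating-function variant via Proposition~\ref{prop:gen} is an equivalent repackaging of the same convolution. The truncation point you flag — that $g^{(n-j)}(0)=B_{n-j}(x_1,\dots,x_{n-j})$ despite $g$ carrying the higher variables — is indeed the only subtlety, and the paper uses it silently.
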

\begin{proof}
    By the Leibniz rule, we have
    \begin{equation*}
        \begin{split}
            B_n(x_1+y_1,\dots,x_n+y_n)             &=
	 \left. \frac{\partial^{n}}{\partial t^{n}} \exp\left(
	  \sum_{i=1}^{n} (x_i+y_i) \frac{t^i}{i!} \right)
	  \right|_{t=0} \\
            &= \left. \left[ \sum_{j=0}^{n} \binom{n}{j}
	  \frac{\partial^{n-j}}{\partial t^{n-j}} \exp\left(
	  \sum_{i=1}^{n} x_i \frac{t^i}{i!} \right)
	  \frac{\partial^{j}}{\partial t^{j}} \exp\left(
	  \sum_{i=1}^{n} y_i \frac{t^i}{i!} \right) \right]
	  \right|_{t=0} \\
            &= \sum_{j=0}^{n} \binom{n}{j} B_{n-j}(x_1,\dots,x_{n-j})
	 B_j(y_1,\dots,y_j).
        \end{split}
    \end{equation*}
\end{proof}

\begin{example}
By (\ref{eq:bell}) and the last identity of Example~\ref{ex1},
for $n \geq 2$,
 \begin{equation}\label{dec}
\begin{split}
  B_n(x_1,\dots,x_{n-1},x_n)
&= B_n(x_1,\dots,x_{n-1},0) + B_n(0,\dots,0,x_n) \\
&=  B_n(x_1,\dots,x_{n-1},0) + x_n.
\end{split}
 \end{equation}
\end{example}

\begin{proposition}\label{ident}
     Let $n \in \mathbb{N}$ and $(x_1,\dots x_n), 
(y_1,\dots,y_n) \in \mathbb{R}^n$. If
\begin{equation*}
 B_k(x_1,\dots,x_k) = B_k(y_1,\dots,y_k)
\end{equation*}
for all $k \leq n$. Then 
$(x_1,\dots x_n)=
(y_1,\dots,y_n)$.
\end{proposition}
\begin{proof}
The result follows by induction. Indeed,
 $x_1 = B_1(x_1) = B_1(y_1) = y_1$. If $x_i = y_i$ for all $i \leq k-1$,
 then, by (\ref{dec}),
\begin{equation*}
\begin{split}
 x_k &= B_k(x_1,\dots,x_{k-1},x_k)- B_k(x_1,\dots,x_{k-1},0) \\
&= B_k(y_1,\dots,y_{k-1},y_k)- B_k(y_1,\dots,y_{k-1},0)  = y_k.
\end{split}
\end{equation*}
\end{proof}
 \begin{proposition}\label{prop:mandc1}
   Let $p \geq 1$ and $X$  be an $L^p$ 
integrable random variable on a probability space
$(\Omega,\mathcal{F},P)$.
For a positive integer 
$n \leq p$,
define the $n$th cumulant $\kappa_n$
of $X$ by
\begin{equation}\label{def-cum1}
        \kappa_n  = (-\sqrt{-1})^n \frac{\partial^{n}}{\partial
     z^n}  \log E[e^{\sqrt{-1}z X}] \bigg|_{z=0}.
\end{equation}
Then, 
    \begin{equation}\label{momentformula1}
       E[X^n] = B_n(\kappa_1,\dots,\kappa_n).
    \end{equation}
 \end{proposition}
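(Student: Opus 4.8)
The plan is to compare the Taylor coefficients at $z=0$ of the characteristic function $\phi(z) = E[e^{\sqrt{-1}zX}]$ with those of $\exp(K(z))$, where $K(z) = \log\phi(z)$ is the cumulant generating function, and then read off (\ref{momentformula1}) from Proposition~\ref{prop:gen}. First I would record the two elementary facts that drive the argument. Since $X \in L^p$ and $n \le p$, dominated convergence (with dominating function $|X|^k$ for $k \le n$) lets me differentiate under the expectation up to order $n$, so $\phi$ is $n$ times differentiable near $z=0$ with $\phi^{(k)}(0) = (\sqrt{-1})^k E[X^k]$; in particular $\phi(0)=1$, so $K = \log\phi$ is well-defined and $C^n$ on a neighborhood of $0$, and by the definition (\ref{def-cum1}) its derivatives there satisfy $K^{(k)}(0) = (\sqrt{-1})^k \kappa_k$.

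The difficulty is that $L^p$ integrability gives only finite smoothness, not analyticity, so Proposition~\ref{prop:gen} cannot be applied to $K$ directly. To get around this I would truncate: let $P(z) = \sum_{k=1}^n \frac{(\sqrt{-1})^k \kappa_k}{k!} z^k$ be the degree-$n$ Taylor polynomial of $K$ at $0$. Because $K(z) - P(z) = o(z^n)$, one has $e^{K(z)} = e^{P(z)}(1 + o(z^n))$, so $\phi$ and $e^{P}$ share the same $n$th Taylor coefficient at $0$; equivalently $\phi^{(n)}(0) = \frac{\mathrm{d}^n}{\mathrm{d}z^n} e^{P(z)}\big|_{z=0}$. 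Now $P$ is a genuine polynomial, so $\exp\!\big(\sum_{k=1}^n \frac{(\sqrt{-1})^k\kappa_k}{k!}z^k\big)$ is entire and Proposition~\ref{prop:gen} applies with $x_k = (\sqrt{-1})^k \kappa_k$, identifying the $n$th coefficient as $B_n((\sqrt{-1})\kappa_1,\dots,(\sqrt{-1})^n\kappa_n)$.

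It remains to strip off the imaginary factors, for which I would use the isobaric homogeneity $B_n(a x_1, a^2 x_2, \dots, a^n x_n) = a^n B_n(x_1,\dots,x_n)$; this follows immediately from the generating-function definition by the rescaling $z \mapsto az$ (a polynomial, hence convergence-free, instance of Proposition~\ref{prop:gen}). Taking $a = \sqrt{-1}$ gives $B_n((\sqrt{-1})\kappa_1,\dots,(\sqrt{-1})^n\kappa_n) = (\sqrt{-1})^n B_n(\kappa_1,\dots,\kappa_n)$. Combining this with $\phi^{(n)}(0) = (\sqrt{-1})^n E[X^n]$ from the first step yields $(\sqrt{-1})^n E[X^n] = (\sqrt{-1})^n B_n(\kappa_1,\dots,\kappa_n)$, and cancelling the common nonzero factor gives (\ref{momentformula1}).

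I expect the truncation step to be the main obstacle: justifying that replacing $K$ by its degree-$n$ Taylor polynomial leaves the $n$th derivative of the exponential at $0$ unchanged is precisely where the absence of analyticity must be handled with care. Everything else—differentiating under the expectation, reading off the coefficient via Proposition~\ref{prop:gen}, and the homogeneity rescaling—is routine.
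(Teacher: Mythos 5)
Your proof is correct, but it takes a genuinely different route from the paper's. The paper first observes that each $\kappa_m$, $m\leq n$, is a polynomial in the moments $E[X^j]$, $j\leq m$, hence depends continuously on $X\in L^n$; since both sides of (\ref{momentformula1}) are then $L^n$-continuous and bounded random variables are dense in $L^n$, it reduces to the case of bounded $X$, where $z\mapsto E[e^{zX}]$ is analytic and Proposition~\ref{prop:gen} applies verbatim to the cumulant generating function. You instead work directly at the stated integrability level with the characteristic function: you replace $K=\log\phi$ by its degree-$n$ Taylor polynomial $P$, argue via the Peano remainder that this leaves the $n$th derivative of the exponential at $0$ unchanged, evaluate that derivative as a complete Bell polynomial (for a polynomial exponent the convergence hypothesis of Proposition~\ref{prop:gen} is automatic --- indeed this is just the definition of $B_n$), and strip the powers of $\sqrt{-1}$ by the isobaric homogeneity $B_n(ax_1,\dots,a^nx_n)=a^nB_n(x_1,\dots,x_n)$. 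The step you flag as delicate is indeed the crux but it is sound: $\phi$ is $C^n$ with $\phi(0)=1$, so $K$ is $C^n$ near $0$ and $K-P=o(z^n)$, whence $\phi-e^{P}=o(z^n)$; two functions $n$ times differentiable at $0$ whose difference is $o(z^n)$ have identical Taylor coefficients up to order $n$, since a polynomial of degree at most $n$ that is $o(z^n)$ vanishes. What the paper's route buys is brevity, outsourcing all finite-smoothness bookkeeping to a density argument so that only analytic generating functions ever appear; what yours buys is a self-contained, classical derivation that needs neither the approximation step nor the continuity of $X\mapsto\kappa_m$. One cosmetic remark: the paper defines $B_n$ for real arguments, so you should note that both Proposition~\ref{prop:gen} for a polynomial input and the homogeneity identity are polynomial identities and therefore remain valid for the complex arguments $(\sqrt{-1})^k\kappa_k$.
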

\begin{proof}
First note that
    \begin{equation*}
        E[X^n]= (-\sqrt{-1})^n \frac{\partial^{n}}{\partial z^{n}}
	              E[e^{\sqrt{-1}z X}] \bigg|_{z=0}.
    \end{equation*}
From this and (\ref{def-cum1}), it is clear that $\kappa_n$ is a polynomial
 of $E[X^m]$, $m \leq n$. 
Note also that 
$L^n \ni X \mapsto E[X^m] \in \mathbb{R} $ is continuous for $m
 \leq n$.
Therefore $L^n \ni X \mapsto \kappa_m \in \mathbb{R} $ is continuous
 for $m \leq n$.
Since $X$ can be approximated by bounded random variables in $L^n$,
in order to show (\ref{momentformula1}),
it is then sufficient to consider bounded $X$.
For a bounded random variable $X$, $z \mapsto E[e^{zX}]$ is analytic and so,
\begin{equation*}
 \sum_{n=0}^{\infty}E[X^n] \frac{z^n}{n!} =
E[e^{zX}]
= \exp\left(
\sum_{n=1}^{\infty}\kappa_n \frac{z^n}{n!}
\right)
\end{equation*}
on a neighborhood of $z = 0$,
which implies (\ref{momentformula1}) in the light of (\ref{gen}).
\end{proof}

See \cite{P} for combinatorial aspects of the Bell polynomials and cumulants.

\section{The conditional cumulants}

Here we introduce conditional cumulants that play a key role
in this work.
   Let $p \geq 2$ and $X$  be an $L^p$ 
integrable random variable on a probability space
$(\Omega,\mathcal{F},P)$.
Let $\mathcal{G}$ be  a sub $\sigma$-algebra  of $\mathcal{F}$.
If there exists a regular conditional probability measure given
$\mathcal{G}$,
then it is natural to 
define the $n$th conditional cumulant $X^{(n)}_\mathcal{G}$
of $X$ given $\mathcal{G}$ by
\begin{equation}\label{def-cum}
        X^{(n)}_\mathcal{G}  = (-\sqrt{-1})^n \frac{\partial^{n}}{\partial
     z^n}  \log E_\mathcal{G}[e^{\sqrt{-1}z X}] \bigg|_{z=0}
\end{equation}
for a positive integer $n \leq p$, 
where $E_\mathcal{G}$ denotes the expectation with respect to the
regular conditional probability measure.
Then, it is clear from Proposition~\ref{prop:mandc1} that
    \begin{equation}\label{momentformula}
       E[X^n|\mathcal{G}] = B_n(X^{(1)}_\mathcal{G},\dots,X^{(n)}_\mathcal{G}).
    \end{equation}
However in general, a regular conditional probability measure might not
exist and there might be no event of probability one on which
$z \mapsto E[e^{\sqrt{-1}zX}|\mathcal{G}]$ is differentiable,
since the conditional expectation is defined only up to a null set for
each $z$.
We therefore take another route. In light of Proposition~\ref{ident}, 
we use (\ref{momentformula}) as the defining property of conditional
cumulants.
\begin{proposition}\label{prop:mandc}
 Define $X^{(i)}_\mathcal{G}$, $i=1,\dots,[p]$ by
\begin{equation*}
 \begin{split}
&  X^{(1)}_\mathcal{G} = E[X|\mathcal{G}],\\
&X^{(n)}_\mathcal{G} = E[X^n|\mathcal{G}]-B_n(X^{(1)}_\mathcal{G},\dots,X^{(n-1)}_\mathcal{G},0)  \ \ \text{ for } n \geq 2.
 \end{split}
\end{equation*}
Then, $X^{(i)}_\mathcal{G} \in L^{p/i}$ and (\ref{momentformula}) holds
 for $n \leq p$.
Further, $L^p \ni X \mapsto X^{(i)}_\mathcal{G} \in L^{p/i}$ is continuous.
\end{proposition}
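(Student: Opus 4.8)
The plan is to prove all three assertions by induction on $n$, treating the moment formula as the backbone and reading off integrability and continuity from the explicit structure of $B_n$. The moment formula (\ref{momentformula}) is in fact almost immediate from the definition: by the decomposition (\ref{dec}),
\[
B_n(X^{(1)}_{\mathcal{G}}, \dots, X^{(n)}_{\mathcal{G}}) = B_n(X^{(1)}_{\mathcal{G}}, \dots, X^{(n-1)}_{\mathcal{G}}, 0) + X^{(n)}_{\mathcal{G}},
\]
and substituting the definition of $X^{(n)}_{\mathcal{G}}$ collapses the right-hand side to $E[X^n|\mathcal{G}]$; the case $n=1$ is the definition of $X^{(1)}_{\mathcal{G}}$. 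So the real content lies in the integrability and continuity claims.

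For integrability, the key structural fact I would isolate first is that $B_n$ is \emph{isobaric} of weight $n$: every monomial $x_1^{a_1}\cdots x_n^{a_n}$ occurring in $B_n$ satisfies $\sum_i i a_i = n$. This follows directly from the defining generating identity, since replacing $x_i$ by $\lambda^i x_i$ amounts to replacing $z$ by $\lambda z$ inside the exponential, whence $B_n(\lambda x_1, \dots, \lambda^n x_n) = \lambda^n B_n(x_1,\dots,x_n)$. Granting this, I argue by induction. The base case $X^{(1)}_{\mathcal{G}} = E[X|\mathcal{G}] \in L^p$ is conditional Jensen. For $2 \le n \le p$, the term $E[X^n|\mathcal{G}]$ lies in $L^{p/n}$ because $X^n \in L^{p/n}$ and conditional expectation is a contraction on $L^{p/n}$ (here $n \le p$ ensures $p/n \ge 1$). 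For the Bell term, each monomial $\prod_{i \le n-1} (X^{(i)}_{\mathcal{G}})^{a_i}$ with $\sum_i i a_i = n$ is a product of factors $(X^{(i)}_{\mathcal{G}})^{a_i}$, which by the induction hypothesis lie in $L^{p/(i a_i)}$; Hölder's inequality with exponents whose reciprocals sum to $\sum_i i a_i / p = n/p$ then places the product, and hence $B_n(X^{(1)}_{\mathcal{G}},\dots,X^{(n-1)}_{\mathcal{G}},0)$, in $L^{p/n}$. Thus $X^{(n)}_{\mathcal{G}} \in L^{p/n}$, closing the induction.

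For continuity I would run the same induction. The map $X \mapsto E[X|\mathcal{G}]$ is a linear contraction on $L^p$, hence continuous. At step $n$, the map $X \mapsto E[X^n|\mathcal{G}]$ is continuous from $L^p$ into $L^{p/n}$, since $X \mapsto X^n$ is continuous from $L^p$ into $L^{p/n}$ (factor $X_k^n - X^n = (X_k - X)\sum_{j} X_k^{j}X^{n-1-j}$ and apply Hölder, using that $\|X_k\|_p$ stays bounded along a convergent sequence) followed by the $L^{p/n}$-contraction $E[\cdot|\mathcal{G}]$. The Bell term is continuous because, by the induction hypothesis, each $X \mapsto X^{(i)}_{\mathcal{G}}$ is continuous into $L^{p/i}$, and multiplication is continuous from $L^{p/a}\times L^{p/b}$ into $L^{p/(a+b)}$ by Hölder; composing through the monomials of $B_n$ and summing yields continuity of $X \mapsto B_n(X^{(1)}_{\mathcal{G}},\dots,X^{(n-1)}_{\mathcal{G}},0)$ into $L^{p/n}$. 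Subtracting the two continuous maps gives continuity of $X \mapsto X^{(n)}_{\mathcal{G}}$.

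The only genuine obstacle is the exponent bookkeeping: one must check that the isobaric weight of every Bell monomial matches exactly the integrability budget dictated by Hölder, so that all products land in $L^{p/n}$ and, since $i a_i \le \sum_j j a_j = n \le p$, no exponent falls below $1$. Once the weight identity $\sum_i i a_i = n$ is in hand, the integrability and continuity arguments are parallel and routine. Accordingly, I would state the isobaric property as a short preliminary lemma (or fold it into the proof) before beginning the induction.
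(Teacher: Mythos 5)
Your proposal is correct and follows essentially the same route as the paper: the moment formula is read off from the decomposition (\ref{dec}), and the integrability and continuity claims are proved by induction using the isobaric (weight-$n$) structure of $B_n$ together with H\"older's inequality — exactly the content the paper isolates as Lemma~\ref{conti}. The only difference is cosmetic: you fold the H\"older bookkeeping into the induction and spell out the continuity of $X \mapsto E[X^n|\mathcal{G}]$, which the paper treats as clear.
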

\begin{proof}
It is clear that $X^{(1)}_\mathcal{G} \in L^p$ and that
$L^p \ni X \mapsto X^{(1)}_\mathcal{G} \in L^p$ is continuous.
By Lemma~\ref{conti} below, it follows by induction that
$X^{(i)}_\mathcal{G} \in L^{p/i}$ and
that  $L^p \ni X \mapsto X^{(i)}_\mathcal{G} \in L^{p/i}$ is continuous.
(\ref{momentformula}) is clear from (\ref{dec}).\end{proof}

\begin{lemma}\label{conti}
Let $n \in \mathbb{N}$ and
 $p \geq n$. For $(X_1,X_2,\dots,X_n) \in L^{p} \times L^{p/2}\times
 \dots \times L^{p/n}$, 
$B_n(X_1,X_2,\dots,X_n) \in L^{p/n}$. Further,
the map
$$L^{p} \times L^{p/2}\times \dots \times L^{p/n} \ni
(X_1,X_2,\dots,X_n) \mapsto B_n(X_1,X_2,\dots,X_n) \in L^{p/n}$$
is continuous.
\end{lemma}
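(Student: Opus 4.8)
The plan is to exploit the fact that the complete Bell polynomial $B_n$ is \emph{isobaric of weight $n$}: each of its monomials $x_1^{a_1}\cdots x_n^{a_n}$ satisfies $\sum_{i=1}^n i\,a_i = n$. First I would record this structural fact directly from the generating-function definition. Replacing $z$ by $\lambda z$ gives $\exp\!\big(\sum_i x_i (\lambda z)^i/i!\big) = \exp\!\big(\sum_i (\lambda^i x_i) z^i/i!\big)$, and differentiating $n$ times at $z=0$ (with the chain rule factor $\lambda^n$ on the left) yields the homogeneity relation
\begin{equation*}
 B_n(\lambda x_1,\lambda^2 x_2,\dots,\lambda^n x_n) = \lambda^n B_n(x_1,\dots,x_n),
\end{equation*}
which forces every monomial to have weight $n$; one may also read this off the combinatorial sum over set partitions.

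For integrability I would treat a single monomial $\prod_{i=1}^n x_i^{a_i}$ with $\sum_i i a_i = n$ and apply the generalized H\"older inequality. Since $X_i \in L^{p/i}$, the power satisfies $X_i^{a_i} \in L^{p/(i a_i)}$ with $\|X_i^{a_i}\|_{p/(i a_i)} = \|X_i\|_{p/i}^{a_i}$. The conjugacy condition for H\"older holds precisely because of the weight identity:
\begin{equation*}
 \sum_{i\,:\,a_i\ge 1} \frac{i a_i}{p} = \frac{1}{p}\sum_{i=1}^n i a_i = \frac{n}{p},
\end{equation*}
so the product lies in $L^{p/n}$ with $\big\|\prod_i X_i^{a_i}\big\|_{p/n} \le \prod_i \|X_i\|_{p/i}^{a_i}$. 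All indices appearing are $\ge 1$, since $i a_i \le n \le p$ and $n \le p$, so these are genuine norms and H\"older applies as stated. Summing the finitely many monomials of $B_n$ against their coefficients then gives $B_n(X_1,\dots,X_n) \in L^{p/n}$ with a bound that is continuous in the individual norms.

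For continuity it suffices, since $B_n$ is a finite linear combination, to prove each monomial map $(X_1,\dots,X_n)\mapsto \prod_i X_i^{a_i}$ is continuous from $L^{p}\times\dots\times L^{p/n}$ to $L^{p/n}$. I would write the monomial as a literal product $\prod_{l=1}^m Y_l$, listing each $X_i$ with multiplicity $a_i$ (so $m=\sum_i a_i$ and $Y_l \in L^{p/i(l)}$), and use the telescoping identity $\prod_l Y_l - \prod_l Y_l' = \sum_l \big(\prod_{j<l} Y_j'\big)(Y_l - Y_l')\big(\prod_{j>l} Y_j\big)$. Applying the same H\"older bookkeeping to each summand bounds its $L^{p/n}$ norm by a product in which exactly one factor is a difference $\|Y_l - Y_l'\|_{p/i(l)}$ and the remaining factors are norms of the $Y_j$ or $Y_j'$. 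Given $X_i^{(k)} \to X_i$ in $L^{p/i}$, each difference tends to zero while the surviving norms stay bounded, so every summand vanishes and the map is sequentially, hence (in these metric spaces) genuinely, continuous.

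The genuinely delicate points are bookkeeping rather than conceptual: keeping the H\"older exponents matched through the weight identity $\sum_i i a_i = n$, and confirming that every space appearing has index $\ge 1$ so that the inequalities apply verbatim. The telescoping step is where I would be most careful, since it is what reduces continuity of a nonlinear (product) map to the already available coordinatewise convergence together with uniform boundedness of the remaining factors.
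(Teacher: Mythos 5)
Your proof is correct and follows essentially the same route as the paper: decompose $B_n$ into its isobaric monomials of weight $n$ and apply the (generalized) H\"older inequality, with the weight identity $\sum_i i a_i = n$ supplying exactly the conjugacy condition needed to land in $L^{p/n}$. The only difference is cosmetic --- you spell out the telescoping argument for continuity, which the paper leaves implicit after the H\"older bound.
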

\begin{proof}
$B_n(x_1,\dots,x_n)$ is a linear combination of terms of the form
$ \prod_{j=1}^k x_{i_j}$,
where $i_j \in \{1,\dots,n\}$ with $\sum_{j=1}^k i_j = n$. 
Therefore it suffices to show that
$\prod_{j=1}^k X_j \in L^{p/n}$ 
for $(X_1,\dots,X_k) \in  L^{p/i_1} \times \dots \times L^{p/i_k}$
and that
\begin{equation*}
 L^{p/i_1} \times \dots \times L^{p/i_k} \ni
(X_1,\dots,X_k) \mapsto \prod_{j=1}^k X_j \in L^{p/n}
\end{equation*}
is continuous. By the H\"older inequality,
\begin{equation*}
 \left\| \prod_{j=1}^k |X_j|^{p/n} \right\|_1
\leq \prod_{j=1}^k \||X_j|^{p/n}\|_{n/i_j}
= \prod_{j=1}^k \|X_j\|_{p/i_j}^{p/n},
\end{equation*}
which implies the result.
\end{proof}

\section{The aggregation property of the conditional cumulant processes}
Here we give our main results.
Let $T>0$ be a fixed constant and 
$(\Omega,\mathcal{F}, P, \{\mathcal{F}_t\}_{t \in [0,T]})$ be a
filtered probability space satisfying the usual conditions.
For a process $F = \{F_t\}$ and $s\leq t$, $F_{s,t}$ denotes $F_t-F_s$.
Let $p \geq 2$ and $X \in L^p$. 
For $n \leq p$, define the $n$th conditional
cumulant process $X^{(n)} = \{X^{(n)}_t\}$, where $X^{(n)}_t =
X^{(n)}_{\mathcal{F}_t}$ defined as in Proposition~\ref{prop:mandc} 
with $\mathcal{G} = \mathcal{F}_t$. This construction allows to take a
cadlag version.
Let $\mathbb{X}^{(n)} = (X^{(1)},\dots,X^{(n)})$.

\begin{lemma}\label{keylem}
 For any positive  integer $n \leq p$ and for any $t \leq u$,
\begin{equation*}
 E[B_n(\mathbb{X}^{(n)}_{t,u})|\mathcal{F}_t] = 0.
\end{equation*}
\end{lemma}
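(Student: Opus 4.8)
The plan is to exploit the binomial-type relation (Proposition~\ref{prop:bell}) together with the moment formula (\ref{momentformula}) and the tower property of conditional expectation. Writing $X^{(i)}_u = X^{(i)}_t + X^{(i)}_{t,u}$ and applying (\ref{eq:bell}) with $x_i = X^{(i)}_t$ and $y_i = X^{(i)}_{t,u}$, I would first expand
\begin{equation*}
B_n(\mathbb{X}^{(n)}_u) = \sum_{j=0}^n \binom{n}{j} B_{n-j}(\mathbb{X}^{(n-j)}_t)\, B_j(\mathbb{X}^{(j)}_{t,u}).
\end{equation*}
Before proceeding I would record that each factor is integrable enough for what follows: by Proposition~\ref{prop:mandc} and Lemma~\ref{conti}, $B_j(\mathbb{X}^{(j)}_{t,u}) \in L^{p/j}$ and $B_{n-j}(\mathbb{X}^{(n-j)}_t) \in L^{p/(n-j)}$, so by the H\"older inequality each summand lies in $L^{p/n} \subseteq L^1$, and the $\mathcal{F}_t$-measurable factor $B_{n-j}(\mathbb{X}^{(n-j)}_t)$ may legitimately be taken out of $E[\cdot|\mathcal{F}_t]$.

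Next I would apply $E[\cdot|\mathcal{F}_t]$ to both sides. On the left, the moment formula (\ref{momentformula}) gives $B_n(\mathbb{X}^{(n)}_u) = E[X^n|\mathcal{F}_u]$, so by the tower property $E[B_n(\mathbb{X}^{(n)}_u)|\mathcal{F}_t] = E[X^n|\mathcal{F}_t] = B_n(\mathbb{X}^{(n)}_t)$. On the right, pulling out the $\mathcal{F}_t$-measurable factors and abbreviating $a_j := E[B_j(\mathbb{X}^{(j)}_{t,u})|\mathcal{F}_t]$ (note $a_0 = 1$ since $B_0 = 1$), this yields
\begin{equation*}
B_n(\mathbb{X}^{(n)}_t) = \sum_{j=0}^n \binom{n}{j} B_{n-j}(\mathbb{X}^{(n-j)}_t)\, a_j.
\end{equation*}
The $j=0$ term equals $B_n(\mathbb{X}^{(n)}_t)$ and cancels the left-hand side, leaving the identity $\sum_{j=1}^n \binom{n}{j} B_{n-j}(\mathbb{X}^{(n-j)}_t)\, a_j = 0$.

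Finally I would conclude by induction on $n$ that $a_n = 0$, which is exactly the claim since $a_n = E[B_n(\mathbb{X}^{(n)}_{t,u})|\mathcal{F}_t]$. For $n=1$ the identity reads $a_1 = 0$. Assuming $a_1 = \dots = a_{n-1} = 0$, every summand with $1 \le j \le n-1$ vanishes and the identity collapses to its $j=n$ term, namely $a_n = 0$. I expect the algebraic heart---recognizing that (\ref{eq:bell}) combined with the tower property produces a triangular linear system that forces each $a_n$ to vanish---to be the main step; the only part requiring genuine care is the integrability bookkeeping that justifies taking the $\mathcal{F}_t$-measurable factors out of the conditional expectation, and that is supplied by Lemma~\ref{conti}.
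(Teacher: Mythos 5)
Your argument is correct, and it takes a genuinely different route from the paper's. You expand $B_n(\mathbb{X}^{(n)}_u)=\sum_{j=0}^{n}\binom{n}{j}B_{n-j}(\mathbb{X}^{(n-j)}_t)\,B_j(\mathbb{X}^{(j)}_{t,u})$ around time $t$, condition on $\mathcal{F}_t$, and solve the resulting triangular system $\sum_{j=1}^{n}\binom{n}{j}B_{n-j}(\mathbb{X}^{(n-j)}_t)\,a_j=0$ by induction on $n$; the integrability bookkeeping via Proposition~\ref{prop:mandc}, Lemma~\ref{conti} and H\"older is exactly what is needed to justify pulling the $\mathcal{F}_t$-measurable factors out of the conditional expectation, and the tower-property step $E[B_n(\mathbb{X}^{(n)}_u)|\mathcal{F}_t]=E[X^n|\mathcal{F}_t]=B_n(\mathbb{X}^{(n)}_t)$ is sound. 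The paper instead first reduces to bounded $X$ using the continuity of $L^p \ni X\mapsto E[B_n(\mathbb{X}^{(n)}_{t,u})|\mathcal{F}_t]$, then expands the reciprocal $E[e^{zX}|\mathcal{F}_t]^{-1}=\sum_{n} B_n(-\mathbb{X}^{(n)}_t)z^n/n!$ as a generating function, multiplies the two power series, inserts the tower property, and identifies the $n$th coefficient directly as $E[B_n(\mathbb{X}^{(n)}_{t,u})|\mathcal{F}_t]$ via Proposition~\ref{prop:bell} applied with $y_i=-X^{(i)}_t$; no induction on $n$ is needed there. Your version buys elementarity: it requires neither the density argument nor the analyticity of $z\mapsto E[e^{zX}|\mathcal{F}_t]$, at the modest cost of an induction, while the paper's version produces each identity in one shot and makes visible the role of the reciprocal generating function of the conditional cumulants.
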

\begin{proof}
 By  Proposition~\ref{prop:mandc} and Lemma~\ref{conti},
\begin{equation*}
 L^p \ni X \mapsto  E[B_n(\mathbb{X}^{(n)}_{t,u})|\mathcal{F}_t]
  \in L^{p/n}
\end{equation*}
is continuous. Therefore it suffices to consider  $X \in L^\infty$
as $L^\infty$ is dense in $L^p$.
If $X \in L^\infty$, then by
Proposition~\ref{prop:gen} and (\ref{momentformula}), we have
\begin{equation*}
 \frac{1}{E[e^{zX}|\mathcal{F}_t]} =
\left(\sum_{n=0}^\infty E[X^n|\mathcal{F}_t]\frac{z^n}{n!}\right)^{-1} =  
\exp\left(-\sum_{n=1}^{\infty}X^{(n)}_t \frac{z^n}{n!}\right)
= \sum_{n=0}^\infty
B_n(-\mathbb{X}^{(n)}_t)\frac{z^n}{n!}
\end{equation*}
on a neighborhood of $z = 0$. This implies
\begin{equation*}
 1 = \left(\sum_{n=0}^\infty E[X^n|\mathcal{F}_t]\frac{z^n}{n!}\right)
\left(
\sum_{n=0}^\infty
B_n(-\mathbb{X}^{(n)}_t)\frac{z^n}{n!}
\right) = \sum_{n=0}^\infty \sum_{j=0}^n
\binom{n}{j}E[X^{n-j}|\mathcal{F}_t]
B_j(-\mathbb{X}^{(j)}_t)\frac{z^n}{n!}
\end{equation*}
and so for $n \geq 1$,
\begin{equation*}
 0 = \sum_{j=0}^n
\binom{n}{j}E[X^{n-j}|\mathcal{F}_t]
B_j(-\mathbb{X}^{(j)}_t) = 
E\left[\sum_{j=0}^n
\binom{n}{j}E[X^{n-j}|\mathcal{F}_u]
B_j(-\mathbb{X}^{(j)}_t) \bigg| \mathcal{F}_t \right].
\end{equation*}
The right hand side coincides with
$E[B_n(\mathbb{X}^{(n)}_u - \mathbb{X}^{(n)}_t)|\mathcal{F}_t]$
by Proposition~\ref{prop:bell} and (\ref{momentformula}).
\end{proof}

Now we give the main result of this paper.

\begin{theorem}\label{main}
For a positive integer $n \leq p-1$,
 define $g_n : \mathbb{R}^{n} \to \mathbb{R}$ by
\begin{equation*}
 g_n(x_1,\dots,x_n) = B_{n+1}(x_1,\dots,x_n,0).
\end{equation*}
Then, 
\begin{equation}\label{rcum}
E[g_n(\mathbb{X}^{(n)}_{t,u})|\mathcal{F}_t] =
 - E[X^{(n+1)}_{t,u}|\mathcal{F}_t]
\end{equation}
for any $t \leq u \leq T$.
In particular, $g_n$ satisfies the aggregation property with
 $\mathbb{X}^{(n)}$, that is,
\begin{equation}\label{agg}
 E[g_n(\mathbb{X}^{(n)}_{s,u})|\mathcal{F}_s] = 
 E[g_n(\mathbb{X}^{(n)}_{s,t})|\mathcal{F}_s] + 
 E[g_n(\mathbb{X}^{(n)}_{t,u})|\mathcal{F}_s]
\end{equation}
for any $s \leq t \leq u \leq T$.
\end{theorem}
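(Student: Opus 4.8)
The plan is to reduce the whole statement to Lemma~\ref{keylem}, which already carries the analytic content. First I would observe that the definition of $g_n$ combined with the decomposition identity~(\ref{dec}), applied to the increments $x_i = X^{(i)}_{t,u}$, yields the pointwise algebraic relation
\[
B_{n+1}(\mathbb{X}^{(n+1)}_{t,u}) = B_{n+1}(X^{(1)}_{t,u},\dots,X^{(n)}_{t,u},0) + X^{(n+1)}_{t,u} = g_n(\mathbb{X}^{(n)}_{t,u}) + X^{(n+1)}_{t,u},
\]
which is valid because $n+1 \geq 2$. The hypothesis $n \leq p-1$ guarantees $n+1 \leq p$, so that $X^{(n+1)}$ is well defined and every term above is integrable by Proposition~\ref{prop:mandc}.

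Next I would take $E[\,\cdot\,|\mathcal{F}_t]$ of this identity. The left-hand side equals $E[B_{n+1}(\mathbb{X}^{(n+1)}_{t,u})|\mathcal{F}_t]$, which vanishes by Lemma~\ref{keylem} applied with index $n+1$ (legitimate precisely because $n+1 \leq p$). Rearranging the two surviving terms then gives~(\ref{rcum}) immediately.

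For the aggregation property~(\ref{agg}) I would apply~(\ref{rcum}) at the three time pairs $(s,u)$, $(s,t)$ and $(t,u)$. The first two deliver $E[g_n(\mathbb{X}^{(n)}_{s,u})|\mathcal{F}_s] = -E[X^{(n+1)}_{s,u}|\mathcal{F}_s]$ and $E[g_n(\mathbb{X}^{(n)}_{s,t})|\mathcal{F}_s] = -E[X^{(n+1)}_{s,t}|\mathcal{F}_s]$ directly. For the pair $(t,u)$ I would start from $E[g_n(\mathbb{X}^{(n)}_{t,u})|\mathcal{F}_t] = -E[X^{(n+1)}_{t,u}|\mathcal{F}_t]$, take a further conditional expectation given $\mathcal{F}_s$, and invoke the tower property to get $E[g_n(\mathbb{X}^{(n)}_{t,u})|\mathcal{F}_s] = -E[X^{(n+1)}_{t,u}|\mathcal{F}_s]$. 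Finally, since $X^{(n+1)}$ is a genuine process its increments are additive, $X^{(n+1)}_{s,u} = X^{(n+1)}_{s,t} + X^{(n+1)}_{t,u}$; conditioning this on $\mathcal{F}_s$ and substituting the three displayed relations yields~(\ref{agg}).

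I do not expect a serious obstacle: once Lemma~\ref{keylem} and~(\ref{dec}) are in hand the argument is purely algebraic together with the tower property. The only points demanding care are the bookkeeping of the index shift (applying Lemma~\ref{keylem} at $n+1$, which is exactly why the hypothesis reads $n \leq p-1$ rather than $n \leq p$) and verifying integrability of each term through Proposition~\ref{prop:mandc} before taking conditional expectations.
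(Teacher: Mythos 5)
Your argument is correct and is precisely the paper's own proof, merely written out in full: the paper derives (\ref{rcum}) from the decomposition (\ref{dec}) applied at index $n+1$ together with Lemma~\ref{keylem}, and deduces (\ref{agg}) from (\ref{rcum}) via additivity of increments and the tower property, exactly as you do. The only difference is that the paper leaves these routine steps implicit ("clear from..."), whereas you have spelled them out, including the correct observation that the hypothesis $n\leq p-1$ is what licenses invoking Lemma~\ref{keylem} at index $n+1$.
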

\begin{proof}
(\ref{rcum}) is clear from (\ref{dec}) and Lemma~\ref{keylem}.
(\ref{agg}) is clear from (\ref{rcum}).
\end{proof}

\begin{example}\label{mainex}
 Let $M = \{M_t\}_{t\in [0,T]}$ is an $L^p$ martingale. Taking $X =
 M_T$, we have $X^{(1)} = M$ and 
$X^{(n)}_T = 0$ for $n\geq 2$. As is well-known, the second cumulant is
 the variance and so, we have
$X^{(2)} = M^{(2)}$, where $M^{(n)}_t =
 E[(M_{t,T})^n|\mathcal{F}_t]$ as in Introduction.
For $n = 1$, 
since $g_n(x_1) = B_2(x_1,0) = x_1^2$, (\ref{rcum}) means
$
 E[(M_{t,u})^2|\mathcal{F}_t] = E[(M_{t,T})^2|\mathcal{F}_t] -  E[(M_{u,T})^2|\mathcal{F}_t]$.
For a partition $0 = t_0 < \dots < t_N = T$,
\begin{equation*}
 \sum_{j=1}^N g_1(\mathbb{X}^{(1)}_{t_{j-1},t_j}) = 
\sum_{j=1}^N (M_{t_{j-1},t_j})^2
\end{equation*}
is the so-called realized variance. The aggregation property implies
\begin{equation*}
E\left[\sum_{j=1}^N (M_{t_{j-1},t_j})^2\right]
= E\left[
 \sum_{j=1}^N g_1(\mathbb{X}^{(1)}_{t_{j-1},t_j}) \right]
= E[g_1(\mathbb{X}^{(1)}_{0,T})] = E[(M_{0,T})^2].
\end{equation*}
The third cumulant is known to be the centered third moment, that is,
$X^{(3)} = M^{(3)}$.
For $n=2$, $g_n(x,y) = B_3(x,y,0) = x^3 + 3xy$.
Therefore
\begin{equation}\label{def-rcum}
 \sum_{j=1}^N g_n(\mathbb{X}^{(n)}_{t_{j-1},t_j})
\end{equation}
with $n=2$ is Neuberger's realized skewness.
Its expectation is $-E[X^{(3)}_{0,T}] = E[X^{(3)}_0] = E[(M_{0,T})^3]$ 
by the aggregation property.
For $n=3$, $g_n(x,y,z) = B_4(x,y,z,0) = x^4+6x^2y + 4xz + 3y^2$.
Therefore (\ref{def-rcum})
with $n=3$ is Bae and Lee's realized kurtosis.
Its expectation is $-E[X^{(4)}_{0,T}] = E[X^{(4)}_0]$ 
by the aggregation property.\\
\end{example}

Based on Theorem~\ref{main} and Example~\ref{mainex}, we 
suggest to call the high-frequency statistic of the form (\ref{def-rcum})
the $(n+1)$-th realized cumulant of the martingale $X^{(1)}$. 
It is an unbiased estimator of the $(n+1)$-th cumulant $X^{(n+1)}_0$ 
of $X$ 
when $X$ is $\mathcal{F}_T$ measurable and
 $\mathcal{F}_0$ consists of null sets and their complements.
\\

We conclude this section with a brief explanation on how the realized
cumulants can be used in financial econometrics.
Suppose that  $X$ represents an asset price at time
$T$ and there is a market of call and put options with maturity $T$ 
written on the asset. The market prices of the options determine a
probability measure (called a risk neutral measure) under which 
each option price is expressed as the
expectation of its option payoff.
The moments and hence cumulants of $X$ under the risk neural measure
 at any time $t < T$ can be therefore computed from the option market
 prices at time $t$.
The $n$th realized cumulant is an ex-post value based on time series of
those conditional cumulant processes,
whose expectation under the risk neutral measure at time $0$
coincides with $X^{(n)}_0$.
Any systematic deviation of the $n$th realized cumulant value from
$X^{(n)}_0$ is due to the difference between the risk neutral measure
and the physical probability measure, and is interpreted as a premium.
See Neuberger~\cite{N} and the references therein for analyses of
variance premium and skewness premium.

\section{
Application to a cumulant recursion formula
}

It is clear from definition that $X^{(n)}$ is an $L^{p/n}$
semimartingale.
The high-frequency limit of the $(n+1)$-th realized cumulant on $[t,T]$ is
\begin{equation*}
 \sum_{s \in (t,T]} g_n(\Delta \mathbb{X}^{(n)}_s) + 
\frac{1}{2}\sum_{j=1}^n \binom{n+1}{j}
\langle X^{(n+1-j),c}, X^{(j),c} \rangle_{t,T},
\end{equation*}
where $X^{(j),c}$ is the continuous local martingale part of $X^{(j)}$.
Here, the convergence is in probability.
The second term comes from the fact that 
the quadratic terms contained in
$B_{n+1}(x_1,\dots,x_{n+1})$ are
\begin{equation*}
\frac{1}{2}\sum_{j=1}^{n}\binom{n+1}{j}x_{n+1-j}x_j
\end{equation*}
since
\begin{equation*}
\exp \left(\sum_{i=1}^\infty x_i \frac{z^i}{i!} \right)
= 1 + \sum_{i=1}^\infty x_i \frac{z^i}{i!} + 
\frac{1}{2}\left( \sum_{i=1}^\infty x_i \frac{z^i}{i!} \right)^2 + \dots.
\end{equation*}
If $X \in L^\infty$, then $X^{(n)}$ is an $L^q$ semimartingale for any $q$,
which ensures the high-frequency convergence is also in $L^1$. 
The aggregation property then implies
\begin{equation*}
E[g_n(\mathbb{X}^{(n)}_{t,T})|\mathcal{F}_t] = E\left[
 \sum_{s \in (t,T]} g_n(\Delta \mathbb{X}^{(n)}_s) + 
\frac{1}{2}\sum_{j=1}^n \binom{n+1}{j}
\langle X^{(n+1-j),c}, X^{(j),c} \rangle_{t,T}
\bigg|\mathcal{F}_t
\right].
\end{equation*}
In case $X$ is $\mathcal{F}_T$ measurable, we have $X^{(n+1)}_T = 0$ and
so, obtain a recursion formula
\begin{equation*}
 X^{(n+1)}_t = E\left[
 \sum_{s \in (t,T]} g_n(\Delta \mathbb{X}^{(n)}_s) + 
\frac{1}{2}\sum_{j=1}^n \binom{n+1}{j}
\langle X^{(n+1-j),c}, X^{(j),c} \rangle_{t,T}
\bigg|\mathcal{F}_t
\right].
\end{equation*}
Moreover if the filtration is continuous, then $\Delta
\mathbb{X}^{(n)} = 0$ and so,
\begin{equation*}
 X^{(n+1)}_t =\frac{1}{2}\sum_{j=1}^n \binom{n+1}{j} 
 E\left[
\langle X^{(n+1-j)}, X^{(j)} \rangle_{t,T}
\bigg|\mathcal{F}_t
\right]
\end{equation*}
or equivalently, for $Y^{(j)} = X^{(j)}/j!$,
\begin{equation*}
Y^{(n+1)} = \frac{1}{2}\sum_{j=1}^n Y^{(n+1-j)} \diamond Y^{(j)},
\end{equation*}
where $\diamond$ is the diamond operation introduced by Al\`os et
al.~\cite{AGR}. 
This last formula 
has been recently obtained by Lacoin et al.~\cite{LRV} in their study of
Quantum Field Theory.
The assumption of $X \in L^\infty$ can be relaxed; see Friz et
al.~\cite{FGR}.
See \cite{FGR} also for several nice applications of this recursion
formula.

\end{document}